\newtheorem{thm}{Theorem}[section]
\newtheorem{de}[thm]{Definition}
\newtheorem{ex}[thm]{Example}
\newtheorem{prp}[thm]{Proposition}
\newtheorem{cor}[thm]{Corollary}
\newtheorem{lem}[thm]{Lemma}
\newtheorem{rem}[thm]{Remark}
 \def\Z{\mathbb {Z}} \def\Q{\mathbb {Q}}
\def\za{\zeta_{\mathcal{A}}}
\def\AA{\mathcal{A}}
\def\fh{\frak H} \def\fho{\fh^1}  
\def\kk{\textbf{k}} \def\ee{\textbf{e}}  \def\ch{\vee}
\font\sevency=wncyr7  \def\sh{\, \hbox{\sevency X}\, }
\title{\LARGE{Ohno-type relation for finite multiple zeta values}}
\author{\large{Kojiro Oyama}}
\date{}
\begin{document}
\maketitle

\pagenumbering{arabic}

\begin{abstract}
Ohno's relation is a well-known relation among multiple zeta values.
In this paper, we prove Ohno-type relation for finite multiple zeta values, which is conjectured by Kaneko.
As a corollary, we give an alternative proof of the sum formula for finite multiple zeta values, which was first proved by Saito and Wakabayashi.
\footnotetext[0]{2010 Mathematics Subject Classification Numbers: Primary 11M32. Secondary 05A19.}
\footnotetext{\textit{Key words and phrases}: finite multiple zeta value, Ohno's relation, sum formula.}
\end{abstract}


\section{Introduction}
\subsection{Finite multiple zeta values}
The finite multiple zeta value is defined by Zagier (\cite{kaneko,kaneko2}).

\begin{de}\label{A}
The $\Q$-algebra $\AA$ is defind by
\begin{equation*}
\AA:=\left(\prod_{p:\rm{prime}} \Z/p\Z\right)\Biggm/\left(\bigoplus_{p:\rm{prime}} \Z/p\Z\right) = \{(a_p)_p~|~a_p\in \Z/p\Z\}/{\sim} ,
\end{equation*}
where $(a_p)_p\sim(b_p)_p$ means $a_p=b_p$ for all but finitely many primes p.
\end{de}

\begin{de}\label{fmzvs}For any \,$\textnormal{\kk}=(k_1,\ldots,k_r)\in(\Z_{\ge1})^r$, the finite multiple zeta value is defined by
\begin{equation*}
\za(\textnormal{\kk})=\za(k_1,\ldots,k_r):=\left(\sum_{\substack{p>m_1>\cdots>m_r>0}}\frac{1}{m_1^{k_1}\cdots m_r^{k_r}}\bmod p\right)_p \in \AA.
\end{equation*}
\end{de}

For an index $\kk=(k_1,\ldots,k_r)\in(\Z_{\geq0})^r$, the integer $k_1+\cdots+k_r$ is called the weight of $\kk$ (denoted by $\textnormal{wt(\kk)}$) 
and the integer $r$ is called the depth of $\kk$ (denoted by $\textnormal{dep(\kk)}$). For $\kk=(k_1,\ldots.k_r)$ and $\kk'=(k_1',\ldots,k_r')$, the symbol $\textnormal{\kk}+\textnormal{\kk}'$ represents the componentwise sum:
\begin{equation*}
\textnormal{\kk}+\textnormal{\kk}':=(k_1+k_1',\ldots,k_r+k_r').
\end{equation*}
For any $\textnormal{\kk}=(k_1,\ldots,k_r)$, we define Hoffman's dual of $\textnormal{\kk}$ by
\begin{equation*}
\textnormal{\kk}^{\ch}:=(\underbrace{1,\ldots,1}_{k_1}+\underbrace{1,\ldots,1}_{k_2}+1,\ldots,1+\underbrace{1,\ldots,1}_{k_r}).
\end{equation*}
By definition, we easily find
\begin{equation*}
\textnormal{dep(\kk)}+\textnormal{dep}(\textnormal{\kk}^{\ch})=\textnormal{wt(\kk)}+1,\quad
(\kk^{\ch})^{\ch}=\kk.
\end{equation*}

\begin{ex}
\begin{align*}
(2,3,1,2)^{\ch}&=(\underbrace{1,1}_{2}+\underbrace{1,1,1}_{3}+\underbrace{1}_{1}+\underbrace{1,1}_{2})\\
&=(1,2,1,3,1),\\
(1,2,1,3,1)^{\ch}&=(\underbrace{1}_{1}+\underbrace{1,1}_{2}
+\underbrace{1}_{1}+\underbrace{1,1,1}_{3}+\underbrace{1}_{1})\\
&=(2,3,1,2).
\end{align*}
\end{ex}

Our main theorem is the following result conjectured by Kaneko (\cite{kaneko,kaneko2}):
\begin{thm}[Main Theorem]\label{main}
For any $(k_1,\ldots,k_r)\in(\Z_{\ge1})^r$ and $n\in\Z_{\geq0}$, we have
\begin{equation*}
\sum_{\substack{e_1+\cdots+e_r=n \\
^\forall e_i\geq 0}}
\za(k_1+e_1,\ldots,k_r+e_r)
=
\sum_{\substack{e'_1+\cdots+e'_s=n \\
^\forall e'_i\geq 0}}
\za((k'_1+e'_1,\ldots,k'_s+e'_s)^{\ch}),
\end{equation*}
where $(k'_1,\ldots,k'_s)=(k_1,\ldots,k_r)^{\ch}$ is Hoffman's dual of $(k_1,\ldots,k_r)\in(\Z_{\ge1})^r$.
\end{thm}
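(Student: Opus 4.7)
My plan is to recast both sides of the identity as coefficients of a single generating series in an auxiliary variable $t$, reduce the theorem to a one-parameter deformation of Hoffman's duality, and then prove that deformation by partial fractions together with a combinatorial matching.

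First, I would invoke the generating function of the complete homogeneous symmetric polynomials $\sum_{n\ge 0}h_n(x_1,\dots,x_r)\,t^n=\prod_{i=1}^r(1-x_it)^{-1}$ together with $h_n(x_1,\dots,x_r)=\sum_{|\ee|=n}\prod_i x_i^{e_i}$. Substituting $x_i=1/m_i$ shows that the left-hand side of the theorem is exactly the coefficient of $t^n$ in
\[
\Phi_{\kk}(t):=\left(\sum_{p>m_1>\cdots>m_r>0}\prod_{i=1}^{r}\frac{1}{m_i^{k_i-1}(m_i-t)}\right)_{\!p}\in\AA[[t]].
\]
For the right-hand side I would use the $n=0$ instance of the theorem, which is precisely Hoffman's duality $\za(\kk)=\za(\kk^{\ch})$ and is already known. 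Applying it termwise gives $\za((\kk^{\ch}+\ee')^{\ch})=\za(\kk^{\ch}+\ee')$, and the same generating-function manipulation identifies the generating series of the right-hand side with $\Phi_{\kk^{\ch}}(t)$. The theorem then reduces to the single identity
\[
\Phi_{\kk}(t)=\Phi_{\kk^{\ch}}(t)\qquad\text{in }\AA[[t]],
\]
a one-parameter deformation of Hoffman's duality.

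To prove this deformed duality I would decompose each rational factor via the partial-fraction identity
\[
\frac{1}{m^{k-1}(m-t)}=\frac{1}{t^{k-1}(m-t)}-\sum_{l=1}^{k-1}\frac{1}{t^{k-l}\,m^{l}},
\]
expand the product over $i$, and reorganize the resulting truncated sums modulo $p$. After expansion each term is labeled by a choice, at every index $i$, of either the ``singular piece'' $1/(m_i-t)$ or one of the ``regular pieces'' $1/m_i^{l}$; this bookkeeping admits a natural combinatorial interpretation matching the binary-string description of Hoffman's duality, in which $\ch$ acts by complementing the sequence of ``plus''/``comma'' separators. I would then establish a bijection between the labeled terms of $\Phi_{\kk}(t)$ and those of $\Phi_{\kk^{\ch}}(t)$ that respects both the power of $t$ and the reduction modulo $p$.

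The main obstacle will be this last matching. At $t=0$ the identity reduces to Hoffman's duality, for which a clean bijective proof is available; in the $t$-deformed setting one must simultaneously track the pole at $m_i=0$ and the new pole at $m_i=t$ of every factor, and then verify that, after the mod-$p$ reduction, the summation ranges align as dictated by $\kk\leftrightarrow\kk^{\ch}$. Propagating the $n=0$ bijection uniformly through all orders in $t$ is where I expect the combinatorial heart of the argument to lie; the sum formula mentioned in the abstract should then drop out by specializing the statement to $\kk=(1,1,\dots,1)$ (whose Hoffman dual is a single entry) and comparing coefficients.
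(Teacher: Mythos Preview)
Your reduction is correct and genuinely differs from the paper's route: using the $n=0$ case (Hoffman's duality $\za(\kk)=\za(\kk^{\ch})$, known from Hoffman's work) to rewrite the right-hand side as $\sum_{|\ee'|=n}\za(\kk^{\ch}+\ee')$ and then packaging both sides into the single identity $\Phi_{\kk}(t)=\Phi_{\kk^{\ch}}(t)$ is clean. The paper never feeds the $n=0$ duality in as input; instead it works in Hoffman's algebra, applies the Ihara--Kaneko--Zagier identity $\frac{1}{1-yu}*w=\frac{1}{1-yu}\sh\Delta_u(w)$, uses $Z_{\AA}(y^n)=\za(1,\dots,1)=0$ to kill the harmonic side, extracts an alternating-sum relation (Lemma~\ref{key}), and finishes by induction on $n$ with inclusion--exclusion.

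The genuine gap is the second half: you have not proved $\Phi_{\kk}(t)=\Phi_{\kk^{\ch}}(t)$. Your partial-fraction decomposition introduces negative powers of $t$, and you do not argue that these cancel after the mod-$p$ summation; in small cases such cancellations occur only because of nontrivial vanishing like $\za(1)=0$, and in general they would have to be established. More seriously, the promised ``bijection between labeled terms'' is never described: the two sides run over chains of different lengths ($r$ versus $s=\textnormal{wt}(\kk)-r+1$), so it is unclear what objects are being matched, how the matching respects the $t$-grading, or why it survives reduction mod $p$. You yourself flag this as ``the main obstacle'' and ``the combinatorial heart,'' and it remains entirely unaddressed. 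As written, this is a plausible strategy with the key step missing, not a proof.

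One minor correction: specializing to $\kk=(1,\dots,1)$ does not give the sum formula as stated (it yields a sum over \emph{all} indices of fixed weight and depth, with no entry forced to be $\ge 2$); the paper obtains its corollary by taking $\kk=(1,\dots,1,2,1,\dots,1)$, whose Hoffman dual has depth $2$.
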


\begin{rem}
We can prove Ohno-type relation for finite real multiple zeta values, by exactly the same proof given in $\S$2.
\end{rem}

\subsection{Algebraic setup and Notation}

Let $\fh:=\Q\langle x,y\rangle$ be the non-commutative polynomial algebra over $\Q$ in two indeterminates $x$ and $y$, 
and $\fho:=\Q+ \fh y$ its subalgebra. Put $z_k:=x^{k-1}y$ for a positive integer $k$. Then $\fho$ is generated by $z_k\;(k=1,2,3,\ldots)$.
For a word $w\in\fh$, the total degree of $w$ is called the weight of $w$ (denoted by $|w|$) and the degree in $y$ is called the depth of $w$.

The harmonic product $*$ on $\fho$ is the $\Q$-bilinear map $*:\fho\longrightarrow\fho$ defined inductively by the following rules:\\
{\bf(i)} 
For any $w\in\fho$, $w*1=1*w=w$.\\
{\bf(ii)} 
For any $w_1,w_2\in\fho$ and $m,n\in\Z_{\geq1}$,
\begin{equation*}
z_mw_1*z_nw_2=z_m(w_1*z_nw_2)+z_n(z_mw_1*w_2)+z_{m+n}(w_1*w_2).
\end{equation*}

The shuffle product $\sh$ on $\fh$ is the $\Q$-bilinear map $\sh:\fh\longrightarrow\fh$ defined inductively by the following rules:\\
{\bf(i)} For any $w\in\fh$, $w\sh1=1\sh w=w$.\\
{\bf(ii)} For $u_1,u_2\in\{x,y\}$ and any $w_1,w_2\in\fh$,
\begin{equation*}
(u_1w_1)\sh (u_2w_2)=u_1(w_1\sh u_2w_2)+u_2(u_1w_1\sh w_2).
\end{equation*}
It is known that these products are commutative and associative (cf: \cite{hoffman3,reu}).

We define a $\Q$-linear map $Z_\AA:\fho\longrightarrow \AA$ by setting
\begin{equation*}
Z_\AA(1)=(1)_p,\quad Z_\AA(z_{k_1}\cdots z_{k_r})=\za(k_1,\ldots,k_r).
\end{equation*}

The following  proposition gives the important properties of $Z_\AA$:

\begin{prp}[\cite{hoffman3,kaneko,kaneko2}]\label{qmap}For any words $w=z_{k_1}\cdots z_{k_r}$ and $w'=z_{k'_1}\cdots z_{k'_s}\in\fho$, we have
\begin{eqnarray*}
&(1)&Z_\AA(w*w')=Z_\AA(w)Z_\AA(w'),\\
&(2)&Z_\AA(w\sh w')=(-1)^{|w|}Z_\AA(z_{k_r}\cdots z_{k_1}z_{k'_1}\cdots z_{k'_s}).
\end{eqnarray*}
\end{prp}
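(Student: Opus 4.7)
The plan is to unwind both identities into explicit statements about sums modulo $p$.

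For part (1), I would induct on the total depth $r+s$. The base case $w=1$ or $w'=1$ is immediate from $w\ast 1 = w$. For the inductive step, expand
\begin{equation*}
Z_\AA(w)Z_\AA(w') = \Biggl(\sum_{\substack{p>m_1>\cdots>m_r>0 \\ p>n_1>\cdots>n_s>0}}\frac{1}{\prod_i m_i^{k_i}\prod_j n_j^{k'_j}}\Biggr)_p
\end{equation*}
and partition the region of summation according to whether $m_1>n_1$, $m_1<n_1$, or $m_1=n_1$. In each case one of $m_1$, $n_1$, or their common value can be peeled off as the outermost index, reducing the inner sum to a product of two finite multiple zeta values on the tails $w_1$, $w'_1$. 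Applying the induction hypothesis yields exactly the three terms of the recursive definition of $\ast$.

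For part (2), I would first establish the reversal symmetry $\za(k_1,\ldots,k_r)=(-1)^{k_1+\cdots+k_r}\za(k_r,\ldots,k_1)$. This follows from the substitution $m_i\mapsto p-m_i$ combined with $(p-m)^{-k}\equiv(-1)^k m^{-k}\pmod p$. Writing $\mathrm{rev}(z_{k_1}\cdots z_{k_r}):=z_{k_r}\cdots z_{k_1}$, this reads $Z_\AA(\mathrm{rev}(W))=(-1)^{|W|}Z_\AA(W)$. To prove (2), I would then strengthen the claim and induct on $|w|+|w'|$: for any word $W\in\fh$,
\begin{equation*}
Z_\AA(W\cdot(w\sh w')) = (-1)^{|w|}Z_\AA(W\cdot\mathrm{rev}(w)\cdot w').
\end{equation*}
The shuffle recursion $w\sh w' = u(\tilde w\sh w')+u'(w\sh\tilde w')$, with $u,u'$ the leading $x,y$-letters of $w,w'$, moves one letter from the shuffle into the prefix, so the inductive hypothesis applies to $(Wu)(\tilde w\sh w')$ and $(Wu')(w\sh\tilde w')$ with strictly smaller total weight. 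Summing the two resulting expressions and running a case analysis on $(u,u')\in\{x,y\}^2$, together with judicious use of the reversal symmetry to reposition letters between the prefix and $\mathrm{rev}(\cdot)$, should collapse the sum into the target single $\za$-value.

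The main obstacle will be the case analysis in part (2). The delicate point is that $\mathrm{rev}$ acts on the $z$-alphabet rather than on the $x,y$-alphabet, so the leading letter $u$ of $w$ does not correspond to a simple trailing letter of $\mathrm{rev}(w)$ — for instance, when $u=x$, the passage from $\mathrm{rev}(\tilde w)$ to $\mathrm{rev}(w)$ is an increment of the final $z$-factor rather than an appended letter. Tracking how this positional mismatch interacts with the prefixes $Wu$, $Wu'$ and the sign $(-1)^{|w|}$ demands careful bookkeeping in each of the four leading-letter cases, and the reversal symmetry must be invoked at precisely the right points so that the two inductively-obtained expressions fuse into the single $\za$-value on the right-hand side.
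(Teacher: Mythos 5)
The paper itself does not prove this proposition; it is imported from \cite{hoffman3,kaneko,kaneko2}, so your attempt has to stand on its own. Part (1) is essentially the standard argument and is fine, modulo one repair: after you peel off the outermost index (say $m_1$ in the region $m_1>n_1$), the two inner sums are truncated at $m_1$, not at $p$, so they are \emph{not} products of finite multiple zeta values on the tails. You must instead induct on the identity $H_{<N}(w_1)H_{<N}(w'_1)=H_{<N}(w_1*w'_1)$ for truncated harmonic sums, uniformly in the cutoff $N$, and only at the end set $N=p$ and reduce modulo $p$.

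Part (2) has a genuine gap: the strengthened statement you propose to induct on, $Z_\AA(W\cdot(w\sh w'))=(-1)^{|w|}Z_\AA(W\cdot\mathrm{rev}(w)\cdot w')$ for an arbitrary prefix word $W$, is false, so no amount of case analysis will close the induction. Already its base case $w'=1$ asserts the ``partial reversal'' $Z_\AA(Ww)=(-1)^{|w|}Z_\AA(W\,\mathrm{rev}(w))$, which is far stronger than the reversal symmetry (the case $W=1$) and wrong. Concretely, take $W=xy$, $w=xy$, $w'=y$: since $xy\sh y=2xyy+yxy$, your identity reads $2\za(2,2,1)+\za(2,1,2)=\za(2,2,1)$, and since $\za(2,1,2)=0$ by reversal symmetry (odd weight, palindromic index), it would force $\za(2,2,1)=0$. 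But the true statement (2) applied to $w=xy$, $w'=xyy$ (where $xy\sh xyy=3z_2z_2z_1+6z_3z_1z_1+z_2z_1z_2$) gives $\za(2,2,1)=-3\za(3,1,1)$, and the sum formula of \S 3 then yields $\za(3,1,1)=\tfrac12\left(B_{p-5}\bmod p\right)_p$, which is not the zero element of $\AA$ (that would require $p\mid B_{p-5}$ for all large $p$). The obstruction is exactly the one you flagged but underestimated: $\mathrm{rev}$ acts on $z$-blocks, so the leading letter of $w$ does not land adjacent to the prefix $W$ in $W\,\mathrm{rev}(w)\,w'$, and there is no local relation tying $Z_\AA(W\,\mathrm{rev}(w)w')$ to $Z_\AA(Wu\,\mathrm{rev}(\tilde w)w')$ and $Z_\AA(Wu'\,\mathrm{rev}(w)\tilde w')$. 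The known proofs do keep your first ingredient: applying $m\mapsto p-m$ to the first $r$ variables of $\za(k_r,\ldots,k_1,k'_1,\ldots,k'_s)$ rewrites $(-1)^{|w|}Z_\AA(\mathrm{rev}(w)w')$ as the sum of $\prod_i m_i^{-k_i}\prod_j n_j^{-k'_j}$ over the joint region $m_1>\cdots>m_r>0$, $n_1>\cdots>n_s>0$, $m_1+n_1<p$; but the identification of that region-sum with the truncated sum attached to $w\sh w'$ is the real content of the theorem and requires a different combinatorial mechanism (see \cite{hoffman,kaneko2}), not prefix-peeling.
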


We define a $\Q$-linear map $T:\fho\longrightarrow\fho$ by
\begin{equation*}
T(w):=\tau(w')y
\end{equation*}
for any $w=w'y\in\fho$, where $\tau$ is the automorphism of $\fh$ given by
\begin{equation*}
\tau(x)=y,\quad\tau(y)=x.
\end{equation*}
The map $T$ represents Hoffman's dual for words, namely, if $Z_\AA(w)=\za(\textnormal{\kk})$, then $Z_\AA(T(w))=\za(\textnormal{\kk}^\ch)$.


\section{Proof of the main theorem}
\subsection{Preliminaries}
\begin{prp}[{\cite[Corollary 3]{ikz}}]\label{IKZ}
For any $w\in\fho$, we have
\begin{equation}\label{lem2}
\frac{1}{1-yu}*w=\frac{1}{1-yu}\sh\Delta_u(w),
\end{equation}
where $\Delta_u$ is the automorphism of ${\fh}$ given by
\begin{equation*}
\Delta_u(x)=x(1+yu)^{-1},\quad\Delta_u(y)=y+x(1+yu)^{-1}yu,
\end{equation*}
and $u$ is a formal parameter.
\end{prp}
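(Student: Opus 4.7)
The plan is to prove this identity by induction on the depth of $w \in \fho$, namely the number of letters $y$ appearing in $w$. The base case $w = 1$ is immediate: both sides reduce to $\frac{1}{1-yu}$, since $1$ is the unit for both $*$ and $\sh$, and $\Delta_u(1) = 1$.

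For the inductive step, I would write $w = z_k w' = x^{k-1} y w'$ with $w' \in \fho$ of strictly smaller depth. On the left-hand side, using $\frac{1}{1-yu} = 1 + yu \cdot \frac{1}{1-yu}$ together with the harmonic product recursion
\begin{equation*}
z_1 w_1 * z_k w_2 = z_1(w_1 * z_k w_2) + z_k(z_1 w_1 * w_2) + z_{k+1}(w_1 * w_2),
\end{equation*}
one decomposes $\frac{1}{1-yu} * z_k w'$ into three pieces; the first two are recursively of the same shape and can be handled by the inductive hypothesis, while the third produces ``contraction'' terms of the form $z_{k+1}(\cdots)$ that shift the leading exponent by one.

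On the right-hand side I would first compute $\Delta_u(z_k w') = \Delta_u(x)^{k-1} \Delta_u(y) \Delta_u(w')$ directly from the defining formulas for $\Delta_u(x)$ and $\Delta_u(y)$, and then expand $\frac{1}{1-yu} \sh \Delta_u(z_k w')$ via the shuffle recursion. The critical point is that when $y^n u^n$ is shuffled against the leading block $x(1+yu)^{-1}\cdots x(1+yu)^{-1}\bigl(y + x(1+yu)^{-1}yu\bigr)$, the negative geometric series $(1+yu)^{-1}$ is precisely tuned to telescope against the insertions of $y$'s produced by the shuffle, leaving behind exactly the contraction terms $z_{k+1}$ needed to match the left-hand side.

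The main obstacle is the combinatorial bookkeeping required to identify the two expansions term by term; the specific form of $\Delta_u$ is engineered so that positive and negative powers of $u$ cancel in the correct pattern. To manage this I would introduce compact shorthand for the series $(1+yu)^{-1}$, organize the contributions by the number of $y$'s inserted from the left factor at each interior position, and reduce the verification to a handful of formal identities in $\Q\langle x, y\rangle[[u]]$ that can be checked by an auxiliary induction on the exponent $k$, thereby confirming the equality coefficient-wise in $u$.
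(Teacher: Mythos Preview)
The paper does not supply a proof of this proposition at all: it is quoted verbatim as \cite[Corollary~3]{ikz} and used as a black box in the subsequent lemmas. So there is no ``paper's own proof'' to compare against; you are attempting to re-derive a cited result.

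As for the viability of your sketch: the induction-on-depth strategy is reasonable, and the recursion you write for the harmonic side does collapse cleanly. Writing $F=\frac{1}{1-yu}*z_kw'$ and $G=\frac{1}{1-yu}*w'$, the harmonic recursion gives $(1-yu)F=(z_k+z_{k+1}u)G$. The real work is then to show the matching identity on the shuffle side, namely that $\frac{1}{1-yu}\sh \Delta_u(z_k)\,v = (1-yu)^{-1}(z_k+z_{k+1}u)\bigl(\frac{1}{1-yu}\sh v\bigr)$ for arbitrary $v$. Your proposal acknowledges this is the ``main obstacle'' but does not actually carry out the telescoping; the phrase ``precisely tuned to telescope'' is where the content lives, and as written it is an assertion rather than an argument. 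In the original reference the identity is obtained not by a direct shuffle expansion of this kind but via an auxiliary $\Q[[u]]$-linear map on $\fh[[u]]$ that intertwines concatenation and shuffle by $\frac{1}{1-yu}$, which sidesteps the term-by-term bookkeeping you anticipate. Your route can be pushed through, but you should expect to isolate and prove that intertwining identity as a standalone lemma rather than hope it falls out of the induction.
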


We define the symbols required to prove Theorem \ref{main} by the following:

\begin{de}For $\textnormal{\kk}=(k_1,k_2,\ldots,k_r)\in(\Z_{\geq1})^r,r\in\Z_{\geq1}$, and $n,l\in\Z_{\geq0}$, we define
\begin{align*}
a_l(k_1,k_2,\ldots,k_r)&:=\sum_{\substack{e_1+\cdots+e_{k_1+\cdots+k_r}=l\\^\forall e_i\geq0}}
\left(\prod_{j_1=1}^{k_1}xy^{e_{j_1}}\right)y\left(\prod_{j_2=1}^{k_2}xy^{e_{k_1+j_2}}\right)y\cdots
\left(\prod_{j_r=1}^{k_r}xy^{e_{k_1+\cdots+k_{r-1}+j_r}}\right)y,\\
A_{\textnormal{\kk},l,n}&:=\sum_{\substack{\lambda_1+\cdots+\lambda_r=n\\^\forall\lambda_j\in\{0,1\}}}a_l(k_1-1+\lambda_1,k_2-1+\lambda_2,\ldots,k_r-1+\lambda_r).
\end{align*}
\end{de}

\begin{lem}For any $\textnormal{\kk}=(k_1,k_2,\ldots,k_r)$ and $n\in\Z_{\geq1}$, we have
\begin{equation}\label{lem1}
\sum_{i=0}^{\min\{n,r\}}\left((-1)^{i}\sum_{\substack{k+l=n-i\\ k,l\geq0}}Z_\AA\left(y^kA_{\textnormal{\kk},l,i}\right)\right)=(0)_p.
\end{equation}
\end{lem}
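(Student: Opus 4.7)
The plan is to apply $Z_\AA$ to the identity in Proposition~\ref{IKZ} with $w=z_{k_1}\cdots z_{k_r}$ and extract the coefficient of $u^n$. By Proposition~\ref{qmap}~(1), the left-hand side of Proposition~\ref{IKZ} becomes $Z_\AA\!\left(\frac{1}{1-yu}\right)Z_\AA(w)=\Bigl(\sum_{k\geq 0}\za(\underbrace{1,\ldots,1}_{k})u^k\Bigr)Z_\AA(w)$. A short auxiliary computation---expanding $\prod_{m=1}^{p-1}(1+t/m)\equiv 1-t^{p-1}\pmod p$ by Fermat's little theorem and Wilson's theorem, and reading off elementary symmetric polynomials---gives $\za(\underbrace{1,\ldots,1}_{k})=0$ in $\AA$ for every $k\geq 1$, so this generating series collapses to $Z_\AA(w)$. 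By Proposition~\ref{qmap}~(2), since reversing $y^k=z_1^k$ yields $y^k$ itself, the right-hand side equals $\sum_{k\geq 0}(-u)^k Z_\AA(y^k\Delta_u(w))$. Hence
\begin{equation*}
Z_\AA(w)=\sum_{k\geq 0}(-u)^k Z_\AA\bigl(y^k\Delta_u(w)\bigr)\qquad\text{in }\AA[[u]].
\end{equation*}

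The second step is a direct combinatorial expansion of $\Delta_u(w)$. Writing $X:=x(1+yu)^{-1}$, one has $\Delta_u(z_k)=X^{k-1}y+X^{k}yu$, and multiplying these $r$ factors together (using that $\Delta_u$ is an algebra homomorphism) yields
\begin{equation*}
\Delta_u(w)=\sum_{\lambda\in\{0,1\}^r}u^{|\lambda|}\,X^{k_1-1+\lambda_1}y\,X^{k_2-1+\lambda_2}y\cdots X^{k_r-1+\lambda_r}y.
\end{equation*}
Substituting the geometric expansion $X^{K}=\sum_{e_1,\ldots,e_K\geq 0}(-u)^{e_1+\cdots+e_K}xy^{e_1}\cdots xy^{e_K}$ and matching the result against the definitions of $a_l$ and $A_{\kk,l,i}$, the coefficient of $u^n$ in $\Delta_u(w)$ equals $\sum_{l+i=n}(-1)^l A_{\kk,l,i}$.

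Finally, I would extract the coefficient of $u^n$ for $n\geq 1$ in the displayed identity (whose left-hand side is $u$-independent) to get $\sum_{k+l+i=n}(-1)^{k+l}Z_\AA(y^k A_{\kk,l,i})=0$. Since $(-1)^{k+l}=(-1)^{n-i}$, multiplying through by $(-1)^n$ yields precisely (\ref{lem1}); the ranges $0\leq i\leq\min\{n,r\}$ are automatic because $i\leq n$ from $k+l+i=n$ and $i\leq r$ from $\lambda\in\{0,1\}^r$. The only step that requires more than formal manipulation is the combinatorial identification of $[u^n]\Delta_u(w)$ with the signed sum of the $A_{\kk,l,i}$'s, which I expect to be the main bookkeeping obstacle; once it is in hand, the vanishing $\za(1,\ldots,1)=0$ together with Propositions~\ref{IKZ} and~\ref{qmap} finishes the argument.
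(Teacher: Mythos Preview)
Your proposal is correct and follows essentially the same route as the paper: both apply Proposition~\ref{IKZ} to $w=z_{k_1}\cdots z_{k_r}$, expand $\Delta_u(w)$ to identify the coefficient of $u^{l+i}$ as $(-1)^l A_{\textnormal{\kk},l,i}$, use Proposition~\ref{qmap}(2) to turn $Z_\AA(y^k\sh\,\cdot)$ into $(-1)^k Z_\AA(y^k\,\cdot)$, and use Proposition~\ref{qmap}(1) together with $\za(1,\ldots,1)=0$ to kill the harmonic side. The only cosmetic differences are that the paper extracts the coefficient of $u^n$ \emph{before} applying $Z_\AA$ (yielding the intermediate identity $z_1^n*w=\sum(-1)^l\,y^k\sh A_{\textnormal{\kk},l,i}$ in $\fho$), whereas you apply $Z_\AA$ first and then compare coefficients, and the paper cites Hoffman for $\za(1,\ldots,1)=0$ while you supply the Fermat--Wilson argument directly.
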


\begin{proof}
For $w=z_{k_1}z_{k_2}\cdots z_{k_r}$, we calculate both sides of \eqref{lem2}.
\begin{equation*}
\textnormal{L.H.S. of \eqref{lem2}}=\sum_{n=0}^{\infty}(yu)^n*z_{k_1}z_{k_2}\cdots z_{k_r}=\sum_{n=0}^{\infty}(z_1^n*z_{k_1}z_{k_2}\cdots z_{k_r})u^n.
\end{equation*}
Here, by the definition of $\Delta_u$, we obtain 
\begin{align*}
\Delta_u(z_{k_1}z_{k_2}\cdots z_{k_r})=&\underbrace{x(1+yu)^{-1}\cdots x(1+yu)^{-1}}_{k_1-1}\{y+x(1+yu)^{-1}yu\}\\
&\times \underbrace{x(1+yu)^{-1}\cdots x(1+yu)^{-1}}_{k_2-1}\{y+x(1+yu)^{-1}yu\}\\
&\times \cdots\\
&\times \underbrace{x(1+yu)^{-1}\cdots x(1+yu)^{-1}}_{k_r-1}\{y+x(1+yu)^{-1}yu\}\\
=&\sum_{l=0}^{\infty}\sum_{i=0}^r(-1)^lA_{\textnormal{\kk},l,i}u^{l+i}.
\end{align*} 
Thus we have
\begin{align*}
\textnormal{R.H.S. of \eqref{lem2}}&=\left(\sum_{k=0}^{\infty}(yu)^k \right)\sh\left(\sum_{l=0}^{\infty}\sum_{i=0}^r (-1)^l A_{\textnormal{\kk},l,i}u^{l+i}\right)\\
&=\sum_{n=0}^{\infty}\sum_{i=0}^r\left(\sum_{\substack{k+l=n\\ k,l\geq0}}(-1)^ly^k\sh A_{\textnormal{\kk},l,i}\right)u^{n+i}.
\end{align*}
Comparing the cofficients of $u^n$ on both sides, we have
\begin{equation}\label{lem3}
\sum_{i=0}^{\min\{n,r\}}\left(\sum_{\substack{k+l=n-i\\ k,l\geq0}}(-1)^ly^k\sh A_{\textnormal{\kk},l,i}\right)=z_1^n*z_{k_1}z_{k_2}\cdots z_{k_r}.
\end{equation}
Applying $Z_\AA$ to both sides of \eqref{lem3} gives
\begin{align*}
Z_\AA\left(\textnormal{L.H.S. of \eqref{lem3}}\right)&=
Z_\AA\left(\sum_{i=0}^{\min\{n,r\}}\left(\sum_{\substack{k+l=n-i\\ k,l\geq0}}(-1)^ly^k\sh A_{\textnormal{\kk},l,i}\right)\right)\\
&=\sum_{i=0}^{\min\{n,r\}}\left(\sum_{\substack{k+l=n-i\\ k,l\geq0}}(-1)^lZ_\AA\left(y^k\sh A_{\textnormal{\kk},l,i}\right)\right)\\
&=\sum_{i=0}^{\min\{n,r\}}\left(\sum_{\substack{k+l=n-i\\ k,l\geq0}}(-1)^{n-i}Z_\AA\left(y^kA_{\textnormal{\kk},l,i}\right)\right),\\
Z_\AA\left(\textnormal{R.H.S. of \eqref{lem3}}\right)&=Z_\AA\left(z_1^n*z_{k_1}z_{k_2}\cdots z_{k_r}\right)\\
&=Z_\AA\left(z_1^n\right)Z_\AA\left(z_{k_1}z_{k_2}\cdots z_{k_r}\right)\\
&=\za(\underbrace{1,\ldots,1}_{n})\za(k_1,k_2,\ldots,k_r)=(0)_p.
\end{align*}
The last equality is a consequence of \cite[Equation(15)]{hoffman} which asserts that $\za(\underbrace{a,\ldots,a}_{r})=(0)_p$ for any $a,r\in\Z_{\geq1}$. 
Multiplying both sides by $(-1)^n$ gives
\begin{equation*}
\sum_{i=0}^{\min\{n,r\}}\left((-1)^{i}\sum_{\substack{k+l=n-i\\ k,l\geq0}}Z_\AA\left(y^kA_{\textnormal{\kk},l,i}\right)\right)=(0)_p.\qedhere
\end{equation*}
\end{proof}

\begin{lem}\label{key}For any $\textnormal{\kk}=(k_1,k_2,\ldots,k_r)$ and $n\in\Z_{\geq1}$, we have
\begin{equation*}
\sum_{i=0}^{\min\{n,r\}}\left((-1)^i\sum_{\substack{{\bm{\lambda}} \in \{0,1\}^r\\ \textnormal{wt}(\bm{\lambda})=i}}
\sum_{\substack{\textnormal{\ee}\in(\Z_{\geq0})^{s+i}\\ \textnormal{wt}(\textnormal{\ee})=n-i}} \za\left(\left((\textnormal{\kk}+\bm{\lambda})^{\ch}+\textnormal{\ee}\right)^{\ch}\right)
\right)=(0)_p,
\end{equation*}
where $s=\textnormal{dep}(\textnormal{\kk}^{\ch})$.
\end{lem}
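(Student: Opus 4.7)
The plan is to derive Lemma \ref{key} from the preceding lemma by rewriting each summand $\sum_{k+l=n-i}Z_\AA(y^k A_{\kk,l,i})$ in a symmetric form. Since $A_{\kk,l,i}$ is itself a sum over $\bm\lambda\in\{0,1\}^r$ with $\textnormal{wt}(\bm\lambda)=i$, it suffices to establish, for each such $\bm\lambda$, the per-$\bm\lambda$ identity
$$
\sum_{k+l=n-i}Z_\AA\bigl(y^k\,a_l(k_1-1+\lambda_1,\dots,k_r-1+\lambda_r)\bigr)
=\sum_{\substack{\ee\in(\Z_{\geq0})^{s+i}\\ \textnormal{wt}(\ee)=n-i}}\za\bigl(((\kk+\bm\lambda)^\ch+\ee)^\ch\bigr).
$$
Summing this over $\bm\lambda$ of weight $i$, weighting by $(-1)^i$, and summing over $i$ then yields Lemma \ref{key} from the preceding lemma.

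Write $\mathbf{m}=(k_1-1+\lambda_1,\dots,k_r-1+\lambda_r)$ and $t=|\mathbf{m}|=|\kk|-r+i$, so that $s+i=t+1$. A generic summand of the left-hand side is the single word
$$
w_{k,\mathbf{e}}:=y^k\cdot(xy^{e_1})\cdots(xy^{e_{m_1}})\cdot y\cdot(xy^{e_{m_1+1}})\cdots(xy^{e_{m_1+m_2}})\cdot y\cdots(xy^{e_t})\cdot y,
$$
indexed by $k\ge0$ and $\mathbf{e}=(e_1,\dots,e_t)\in\Z_{\geq0}^t$ with $k+|\mathbf{e}|=n-i$. The core task is the word-level identity
$$
\text{ind}\bigl(T(w_{k,\mathbf{e}})\bigr)=(\kk+\bm\lambda)^\ch+(k,e_1,\dots,e_t),
$$
where $\text{ind}(\cdot)$ extracts the index tuple from the $z_k$-decomposition of a word in $\fho$. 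Granted this, applying $Z_\AA$ together with $Z_\AA(T(w))=\za(\text{ind}(w)^\ch)$ and $(\kk^\ch)^\ch=\kk$ gives $Z_\AA(w_{k,\mathbf{e}})=\za(((\kk+\bm\lambda)^\ch+(k,\mathbf{e}))^\ch)$, and the assignment $\ee=(k,\mathbf{e})$ matches the two sums (both have $s+i=t+1$ coordinates summing to $n-i$).

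For the word-level identity I would argue by a two-step linearity. First, in the base case $k=0$, $\mathbf{e}=\mathbf{0}$, the word $w_{0,\mathbf{0}}=x^{m_1}y\,x^{m_2}y\cdots x^{m_r}y$ has index $(m_1+1,\dots,m_r+1)=\kk+\bm\lambda$, so $\text{ind}(T(w_{0,\mathbf{0}}))=(\kk+\bm\lambda)^\ch$ by the defining property of $T$ stated in the introduction. Second, incrementing $k$ by one inserts an extra $y$ at the very start of $w_{k,\mathbf{e}}$, and incrementing any $e_j$ by one inserts an extra $y$ immediately after the $j$-th $x$. After applying $\tau$ (turning the inserted $y$ into an $x$) and re-parsing $T(w_{k,\mathbf{e}})$ into $z_k$-blocks, exactly one coordinate of $\text{ind}(T(w_{k,\mathbf{e}}))$ increases by $1$: coordinate $0$ for a $k$-increment, coordinate $j$ for an $e_j$-increment. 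Hence $\text{ind}(T(w_{k,\mathbf{e}}))$ is linear in $(k,\mathbf{e})$, and the base case supplies the constant term.

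The delicate point is the case $m_p=0$ for some middle $p$ (equivalently $k_p=1,\ \lambda_p=0$), in which the $p$-th group contributes nothing but its trailing separator $y$ persists. This creates consecutive $y$'s in $w_{k,\mathbf{e}}$ and, after $\tau$, an $x$-run of length $\ge2$ in $T(w_{k,\mathbf{e}})$ that fuses what would otherwise be two or more $z_k$-blocks. The linearity step still goes through unchanged, since single-letter insertions only shift one coordinate at a time; the work is in verifying the base identity $\text{ind}(T(w_{0,\mathbf{0}}))=(\kk+\bm\lambda)^\ch$ directly against the recursive definition of Hoffman's dual given in the introduction in these degenerate configurations. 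I expect this bookkeeping to be the main obstacle, but once resolved the per-$\bm\lambda$ identity — and hence Lemma \ref{key} — follows at once.
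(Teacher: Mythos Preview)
Your proposal is correct and follows essentially the same route as the paper: both reduce to the per-$\bm\lambda$ identity (the paper's equation \eqref{lem4} with $\kk$ replaced by $\kk+\bm\lambda$), prove it by a direct word computation showing that $T(w_{k,\mathbf e})$ is the word for $(\kk+\bm\lambda)^\ch+(k,\mathbf e)$, and then sum over $\bm\lambda$ and $i$ to recover Lemma~\ref{key} from the preceding lemma. The only cosmetic difference is that the paper writes out $T(w_{k,\mathbf e})$ explicitly in one shot, whereas you phrase the same verification as a base case plus an incremental ``linearity'' step; your worry about the case $m_p=0$ is unfounded, since the base identity $\mathrm{ind}(T(w_{0,\mathbf 0}))=(\kk+\bm\lambda)^\ch$ is literally the defining property of $T$ stated in the introduction (valid for any index in $(\Z_{\ge1})^r$, including those with entries equal to $1$), so no extra bookkeeping is needed there.
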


\begin{proof}
We first show the following equality:
\begin{equation}\label{lem4}
\sum_{\substack{k+l=n\\ k,l\geq0}}Z_\AA\left(y^k a_l(k_1-1,k_2-1,\ldots,k_r-1)\right)
=\sum_{\substack{\textnormal{\ee}\in(\Z_{\geq0})^{s}\\ \textnormal{wt}(\textnormal{\ee})=n}}\za((\textnormal{\kk}^{\ch}+ \textnormal{\ee})^{\ch}),
\end{equation}
where $s=\textnormal{dep}(\textnormal{\kk}^{\ch})$. By the definition of $a_l(k_1,k_2,\ldots,k_r)$, we easily find
\begin{align*}
\textnormal{L.H.S. of \eqref{lem4}}&=\sum_{\substack{k+e_1+\cdots+e_{k_1+\cdots+k_r-r}=n\\k,^\forall e_i\geq0}}
Z_\AA\left(y^k\left(\prod_{j_1=1}^{k_1-1}xy^{e_{j_1}}\right)y
\cdots\left(\prod_{j_r=1}^{k_r-1}xy^{e_{k_1+\cdots+k_{r-1}-r+1+j_r}}\right)y\right)\\
&=\sum_{\substack{e_1+\cdots+e_{k_1+\cdots+k_r-r+1}=n\\^\forall e_i\geq0}}
Z_\AA\left(y^{e_1}\left(\prod_{j_1=1}^{k_1-1}xy^{e_{j_1+1}}\right)y
\cdots\left(\prod_{j_r=1}^{k_r-1}xy^{e_{k_1+\cdots+k_{r-1}-r+2+j_r}}\right)y\right)\\
&=\sum_{\substack{e_1+\cdots+e_{k_1+\cdots+k_r-r+1}=n\\^\forall e_i\geq0}}\za((\textnormal{\kk}^{\ch}+ \textnormal{\ee})^{\ch}),
\end{align*}
where the last equality holds by observing the following:
The word which corresponds to the index $\kk^{\ch}=(k_1,\ldots,k_r)^{\ch}$ is 
\[T(x^{k_1-1}yx^{k_2-1}y\cdots x^{k_r-1}y)=y^{k_1-1}xy^{k_2-1}x\cdots y^{k_r},\] 
and thus \[ \left(\prod_{j_1=1}^{k_1-1}x^{e_{j_1}}y\right)x\left(\prod_{j_2=1}^{k_2-1}x^{e_{k_1-1+j_2}}y\right)x
\cdots\left(\prod_{j_r=1}^{k_r}x^{e_{k_1+\cdots+k_{r-1}-r+1+j_r}}y\right)\] corresponds to $\kk^{\ch}+\ee$ $\left(\ee=(e_1,\ldots,e_{k_1+\cdots+k_r-r+1})\right).$ 
By taking Hoffman's dual, we see that the word which corresponds to $(\kk^{\ch}+\ee)^{\ch}$ is \: \begin{align*}
&T\left(\left(\prod_{j_1=1}^{k_1-1}x^{e_{j_1}}y\right)x
\left(\prod_{j_2=1}^{k_2-1}x^{e_{k_1-1+j_2}}y\right)x
\cdots\left(\prod_{j_r=1}^{k_r}x^{e_{k_1+\cdots+k_{r-1}-r+1+j_r}}y\right)\right)\\
&=y^{e_1}\left(\prod_{j_1=1}^{k_1-1}xy^{e_{j_1+1}}\right)y
\left(\prod_{j_2=1}^{k_2-1}xy^{e_{k_1+j_2}}\right)y
\cdots\left(\prod_{j_r=1}^{k_r-1}xy^{e_{k_1+\cdots+k_{r-1}-r+2+j_r}}\right)y.
\end{align*}
Thus we find \eqref{lem4}. Accordingly, we have
\begin{align*}
&\textnormal{L.H.S. of \eqref{lem1}}\\
&=\sum_{i=0}^{\min\{n,r\}}\left((-1)^{i}\sum_{\substack{k+l=n-i\\ k,l\geq0}}Z_\AA\left(y^kA_{\textnormal{\kk},l,i}\right)\right)\\
&=\sum_{i=0}^{\min\{n,r\}}\left((-1)^{i}\sum_{\substack{k+l=n-i\\ k,l\geq0}}\sum_{\substack{\lambda_1+\cdots+\lambda_r=i\\^\forall\lambda_j\in\{0,1\}}}
Z_\AA\left(y^ka_l(k_1-1+\lambda_1,k_2-1+\lambda_2,\ldots,k_r-1+\lambda_r)\right)\right)\\
&=\sum_{i=0}^{\min\{n,r\}}\left((-1)^i\sum_{\substack{{\bm{\lambda}} \in \{0,1\}^r\\ \textnormal{wt}(\bm{\lambda})=i}}
\sum_{\substack{\textnormal{\ee}\in(\Z_{\geq0})^{s+i}\\ \textnormal{wt}(\textnormal{\ee})=n-i}} \za\left(\left((\textnormal{\kk}+\bm{\lambda})^{\ch}+\textnormal{\ee}\right)^{\ch}\right)
\right).\qedhere
\end{align*}
\end{proof}

\subsection{Proof of the main theorem}
Thorem \ref{main} can be rewritten as follows:
\begin{thm}[Theorem \ref{main}]
For any $\textnormal{\kk}=(k_1,k_2,\cdots,k_r)$ and $n\in\Z_{\geq0}$, we have
\begin{equation*}
\sum_{\substack{\textnormal{\ee}\in(\Z_{\geq0})^{r}\\ \textnormal{wt}(\textnormal{\ee})=n}}\za(\textnormal{\kk}+ \textnormal{\ee})
=\sum_{\substack{\textnormal{\ee}'\in(\Z_{\geq0})^{s}\\ \textnormal{wt}(\textnormal{\ee}')=n}}\za((\textnormal{\kk}^{\ch}+ \textnormal{\ee}')^{\ch}),
\end{equation*}
where $s=\textnormal{dep}(\textnormal{\kk}^{\ch})$.
\end{thm}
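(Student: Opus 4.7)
The plan is to prove the theorem by induction on $n$, using Lemma \ref{key} as the engine. The base case $n = 0$ is immediate, since both sides reduce to $\za(\kk) = \za((\kk^{\ch})^{\ch})$.

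For the inductive step ($n \geq 1$), I assume the theorem for all smaller values of $n$ and every index. Write $L_n(\kk)$ and $R_n(\kk)$ for the left- and right-hand sides of the theorem, respectively. I first extract the structure of Lemma \ref{key}: the $i = 0$ contribution (where $\bm{\lambda} = 0$) is exactly $R_n(\kk)$, and for $i \geq 1$ the identity $\textnormal{wt}(\kk+\bm{\lambda}) = \textnormal{wt}(\kk) + i$ gives $\textnormal{dep}((\kk+\bm{\lambda})^{\ch}) = s + i$, so the corresponding inner sum is $\sum_{\bm{\lambda}} R_{n-i}(\kk + \bm{\lambda})$. Thus Lemma \ref{key} rearranges to
\[
R_n(\kk) = \sum_{i=1}^{\min\{n,r\}} (-1)^{i+1} \sum_{\substack{\bm{\lambda} \in \{0,1\}^r \\ \textnormal{wt}(\bm{\lambda}) = i}} R_{n-i}(\kk + \bm{\lambda}).
\]

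Next, the inductive hypothesis applied at each level $n - i < n$ permits replacing $R_{n-i}(\kk+\bm{\lambda})$ with $L_{n-i}(\kk+\bm{\lambda})$. It then suffices to verify the purely combinatorial identity
\[
L_n(\kk) = \sum_{i=1}^{\min\{n,r\}} (-1)^{i+1} \sum_{\substack{\bm{\lambda} \in \{0,1\}^r \\ \textnormal{wt}(\bm{\lambda}) = i}} L_{n-i}(\kk+\bm{\lambda}).
\]
I would establish this by the substitution $\ee' := \bm{\lambda} + \ee$ inside the definition of $L_{n-i}(\kk+\bm{\lambda})$: each $\ee' \in (\Z_{\geq 0})^r$ of weight $n$ with support size $m := \#\{j : e'_j \geq 1\}$ is produced exactly $\binom{m}{i}$ times from pairs $(\bm{\lambda}, \ee)$ with $\textnormal{wt}(\bm{\lambda}) = i$. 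Since $n \geq 1$ forces $m \geq 1$, the binomial sum $\sum_{i \geq 1}(-1)^{i+1}\binom{m}{i} = 1 - (1-1)^m = 1$, so the right-hand side collapses to $L_n(\kk)$.

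The substantive work has already been done in Lemma \ref{key} (whose proof rests on Proposition \ref{IKZ}); what remains for this theorem is a clean inclusion-exclusion. The only points requiring care are confirming the depth relation $\textnormal{dep}((\kk+\bm{\lambda})^{\ch}) = s+i$ so that the inductive hypothesis is applied at the correct outer depth, and tracking the reparametrization $\ee' = \bm{\lambda} + \ee$ accurately enough to produce the binomial coefficient. I do not anticipate any serious obstacle beyond this bookkeeping.
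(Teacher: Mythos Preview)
Your proposal is correct and follows essentially the same route as the paper: induction on $n$ with Lemma~\ref{key} isolating the $i=0$ term $R_n(\kk)$, the inductive hypothesis converting each $R_{n-i}(\kk+\bm{\lambda})$ to $L_{n-i}(\kk+\bm{\lambda})$, and the same inclusion--exclusion (counting each $\ee'$ of support size $m$ with multiplicity $\sum_{i\ge 1}(-1)^{i+1}\binom{m}{i}=1$) to recover $L_n(\kk)$. Your explicit check that $\textnormal{dep}((\kk+\bm{\lambda})^{\ch})=s+i$ is exactly the bookkeeping the paper leaves implicit.
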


\begin{proof}Let us prove Theorem \ref{main} by induction on $n$. 
First, it is trivial in the case $n=0$. 
Next, we assume that the equality holds for values smaller than $n$. Then the assumption shows that
\begin{align*}
\sum_{\substack{{\bm{\lambda}} \in \{0,1\}^r\\ \textnormal{wt}(\bm{\lambda})=i}}
\sum_{\substack{\textnormal{\ee}\in(\Z_{\geq0})^{s+i}\\ \textnormal{wt}(\textnormal{\ee})=n-i}} \za\left(\left((\textnormal{\kk}+\bm{\lambda})^{\ch}+\textnormal{\ee}\right)^{\ch}\right)
&=\sum_{\substack{{\bm{\lambda}} \in \{0,1\}^r\\ \textnormal{wt}(\bm{\lambda})=i}}
\sum_{\substack{\textnormal{\ee}'\in(\Z_{\geq0})^{r}\\ \textnormal{wt}(\textnormal{\ee}')=n-i}} \za\left((\textnormal{\kk}+\bm{\lambda})+\textnormal{\ee}'\right)\\
&=\sum_{1\leq \mu_1<\cdots<\mu_i\leq r}\sum_{\substack{\ee''\in(\Z_{\geq0})^r\\ \textnormal{wt}(\ee')=n\\ e_{\mu_1},\ldots,e_{\mu_i}\geq1}}\za(\kk+\ee'')
\end{align*}
for $i$ with $1\leq i\leq\min\{n,r\}$. 
By Lemma\;\ref{key}, we obtain
\begin{align*}
\sum_{\substack{{\bm{\lambda}} \in \{0,1\}^r\\ \textnormal{wt}(\bm{\lambda})=0}}
\sum_{\substack{\textnormal{\ee}\in(\Z_{\geq0})^{s}\\ \textnormal{wt}(\textnormal{\ee})=n}} \za\left(\left((\textnormal{\kk}+\bm{\lambda})^{\ch}+\textnormal{\ee}\right)^{\ch}\right)
&=
\sum_{i=1}^{\min\{n,r\}}\left((-1)^{i-1}\sum_{\substack{{\bm{\lambda}} \in \{0,1\}^r\\ \textnormal{wt}(\bm{\lambda})=i}}
\sum_{\substack{\textnormal{\ee}\in(\Z_{\geq0})^{s+i}\\ \textnormal{wt}(\textnormal{\ee})=n-i}} \za\left(\left((\textnormal{\kk}+\bm{\lambda})^{\ch}+\textnormal{\ee}\right)^{\ch}\right)
\right).
\end{align*}
Therefore we have
\begin{align*}
\sum_{\substack{\textnormal{\ee}\in(\Z_{\geq0})^{s}\\ \textnormal{wt}(\textnormal{\ee})=n}} \za\left(\left(\textnormal{\kk}^{\ch}+\textnormal{\ee}\right)^{\ch}\right)
&=
\sum_{\substack{{\bm{\lambda}} \in \{0,1\}^r\\ \textnormal{wt}(\bm{\lambda})=0}}
\sum_{\substack{\textnormal{\ee}\in(\Z_{\geq0})^{s}\\ \textnormal{wt}(\textnormal{\ee})=n}} \za\left(\left((\textnormal{\kk}+\bm{\lambda})^{\ch}+\textnormal{\ee}\right)^{\ch}\right)\\
&=
\sum_{i=1}^{\min\{n,r\}}\left((-1)^{i-1}\sum_{\substack{{\bm{\lambda}} \in \{0,1\}^r\\ \textnormal{wt}(\bm{\lambda})=i}}
\sum_{\substack{\textnormal{\ee}\in(\Z_{\geq0})^{s+i}\\ \textnormal{wt}(\textnormal{\ee})=n-i}} \za\left(\left((\textnormal{\kk}+\bm{\lambda})^{\ch}+\textnormal{\ee}\right)^{\ch}\right)
\right)\\
&=\sum_{i=1}^{\min\{n,r\}}\left((-1)^{i-1}\sum_{1\leq \mu_1<\cdots<\mu_i\leq r}
\sum_{\substack{\ee''\in(\Z_{\geq0})^r\\ \textnormal{wt}(\ee'')=n\\ e_{\mu_1},\ldots,e_{\mu_i}\geq1}}\za(\kk+\ee'')
\right).
\end{align*}
Hence we must show the following identity:
\begin{equation*}
\sum_{i=1}^{\min\{n,r\}}\left((-1)^{i-1}\sum_{1\leq \mu_1<\cdots<\mu_i\leq r}
\sum_{\substack{\ee''\in(\Z_{\geq0})^r\\ \textnormal{wt}(\ee'')=n\\ e_{\mu_1},\ldots,e_{\mu_i}\geq1}}\za(\kk+\ee'')
\right)
=\sum_{\substack{\textnormal{\ee}\in(\Z_{\geq0})^{r}\\ \textnormal{wt}(\textnormal{\ee})=n}} \za(\kk+\ee).
\end{equation*}
We fix the index $\ee=(e_1,\ldots,e_r)$ such that $\textnormal{wt(\ee)}=n$, 
and let the number of non-zero components of the fixed index $\ee$ be $m$ $(1\leq m\leq\min\{n,r\})$. 
In the left-hand side, the number of appearances of the index $\ee$ is

\begin{equation*}
\binom{m}{1}-\binom{m}{2}+\binom{m}{3}-\cdots=\sum_{i=1}^{m}(-1)^{i-1}\binom{m}{i}=1.
\end{equation*}
Thus we have
\begin{equation*}
\sum_{i=1}^{\min\{n,r\}}\left((-1)^{i-1}\sum_{1\leq \mu_1<\cdots<\mu_i\leq r}
\sum_{\substack{\ee''\in(\Z_{\geq0})^r\\ \textnormal{wt}(\ee'')=n\\ e_{\mu_1},\ldots,e_{\mu_i}\geq1}}\za(\kk+\ee'')
\right)
=\sum_{\substack{\textnormal{\ee}\in(\Z_{\geq0})^{r}\\ \textnormal{wt}(\textnormal{\ee})=n}} \za(\kk+\ee).
\end{equation*}
Therefore the identity holds for $n$. The proof is complete.
\end{proof}


\section{Sum formula}
The following identity was first proved by Saito and Wakabayashi (\cite{saito}).
\begin{cor}[Sum fomula {\cite[Theorem 1.4]{saito}}]For $k,r,i\in\Z_{\geq1}$ with $1\leq i\leq r\leq k-1$, we have
\begin{equation}\label{sumformula}
\sum_{\substack{k_1+\cdots+k_r=k\\^\forall k_j\geq1,k_i\geq2}}\za(k_1,\ldots,k_r)=
(-1)^{i-1}\left(\binom{k-1}{i-1}+(-1)^r\binom{k-1}{r-i}\right)\left(\frac{B_{p-k}}{k}\bmod p\right)_p.
\end{equation}
\end{cor}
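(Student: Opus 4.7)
The plan is to derive the sum formula by applying Theorem \ref{main} to a single well-chosen index and then invoking a closed-form evaluation of certain height-one finite multiple zeta values.

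First, I would apply Theorem \ref{main} with $\kk = (\underbrace{1, \ldots, 1}_{i-1}, 2, \underbrace{1, \ldots, 1}_{r-i})$ (depth $r$, weight $r + 1$) and Ohno parameter $n = k - r - 1 \geq 0$. Under the substitution $k_j = 1 + e_j$ for $j \neq i$ and $k_i = 2 + e_i$, the left-hand side of Theorem \ref{main} becomes precisely the left-hand side of \eqref{sumformula}. On the other side, a direct unfolding of the definition of Hoffman's dual gives $\kk^{\ch} = (i, r-i+1)$, a depth-$2$ index; combined with the easy identity $(a, b)^{\ch} = (\underbrace{1, \ldots, 1}_{a-1}, 2, \underbrace{1, \ldots, 1}_{b-1})$, the right-hand side of Theorem \ref{main} reorganizes (by the change of variable $a = i + e'_1$) into
\[
\sum_{a = i}^{k - r + i - 1} \za(\underbrace{1, \ldots, 1}_{a-1}, 2, \underbrace{1, \ldots, 1}_{k - a - 1}).
\]

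Next, I would substitute the closed-form evaluation
\[
\za(\underbrace{1, \ldots, 1}_{a-1}, 2, \underbrace{1, \ldots, 1}_{b-1}) = (-1)^{a-1}\binom{k}{a}\left(\frac{B_{p-k}}{k} \bmod p\right)_p \qquad (a + b = k),
\]
which is a known identity in the finite multiple zeta value literature (both sides vanish when $k$ is even, so the case of interest is $k$ odd). After factoring $(B_{p-k}/k \bmod p)_p$ out of the sum, the corollary reduces to the purely combinatorial identity
\[
\sum_{a = i}^{k - r + i - 1} (-1)^{a-1}\binom{k}{a} = (-1)^{i-1}\left(\binom{k-1}{i-1} + (-1)^r \binom{k-1}{r-i}\right).
\]

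This binomial identity is straightforward: writing the partial sum as $\sum_{a=0}^{i-1}(-1)^a \binom{k}{a} - \sum_{a=0}^{k-r+i-1}(-1)^a \binom{k}{a}$ and applying the standard telescoping $\sum_{a=0}^m (-1)^a \binom{k}{a} = (-1)^m \binom{k-1}{m}$ to each piece, then using $\binom{k-1}{k-r+i-1} = \binom{k-1}{r-i}$ and the parity of $k$, one obtains exactly the target coefficient. The main obstacle in this approach is the closed-form evaluation of $\za(\underbrace{1,\ldots,1}_{a-1},2,\underbrace{1,\ldots,1}_{b-1})$ in the second step: this identity does not follow from Theorem \ref{main} alone and must be imported from the separate literature on finite multiple harmonic sums (e.g., Hoffman's computations).
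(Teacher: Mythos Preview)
Your proposal is correct and follows essentially the same route as the paper: the paper also applies Theorem \ref{main} to $\kk=(\underbrace{1,\ldots,1}_{i-1},2,\underbrace{1,\ldots,1}_{r-i})$ with $n=k-r-1$, reduces the right-hand side to the same sum of height-one values $\za(\{1\}^{A},2,\{1\}^{B})$, plugs in the closed form for these, and finishes with the identical telescoping binomial computation together with the even/odd $k$ case split. The one discrepancy is bibliographic: the closed-form evaluation you need is cited in the paper as \cite[Theorem 4.5]{hes} (Hessami Pilehrood--Hessami Pilehrood--Tauraso), not Hoffman; note also that their version reads $(-1)^{b}\binom{k}{b}(B_{p-k}/k)$ in your $(a,b)$ notation, which matches your $(-1)^{a-1}\binom{k}{a}$ only for odd $k$, exactly as you observed.
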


\begin{proof}
Let $\kk=(\underbrace{1,\ldots,1}_{i-1},2,\underbrace{1,\ldots,1}_{r-i})$ and $n=k-r-1$.
Then $\kk^{\ch}=(i,r+1-i)$ and $s=2$. Apply Theorem \ref{main} to this $\kk$ to get:
\begin{align*}
&\sum_{\substack{\textnormal{\ee}\in(\Z_{\geq0})^{r}\\ \textnormal{wt}(\textnormal{\ee})=k-r-1}}\za(\textnormal{\kk}+ \textnormal{\ee})\\
&=\sum_{\substack{e_1+\cdots+e_r=k-r-1\\^\forall e_j\geq0}}\za(1+e_1,\ldots,1+e_{i-1},2+e_{i},1+e_{i+1},\ldots,1+e_r)\\
&=\sum_{\substack{k_1+\cdots+k_r=k\\^\forall k_j\geq1,k_i\geq2}}\za(k_1,\ldots,k_r),\\
&\sum_{\substack{\textnormal{\ee}'\in(\Z_{\geq0})^{2}\\ \textnormal{wt}(\textnormal{\ee}')=k-r-1}}\za((\textnormal{\kk}^{\ch}+ \textnormal{\ee}')^{\ch})\\
&=\sum_{\substack{e_1'+e_2'=k-r-1\\^\forall e_j'\geq0}}\za((i+e_1',r+1-i+e_2')^{\ch})\\
&=\sum_{\substack{e_1'+e_2'=k-r-1\\^\forall e_j'\geq0}}\za(\underbrace{1,\ldots,1}_{i-1+e_1'},2,\underbrace{1,\ldots,1}_{r-i+e_2'})\\
&=\sum_{\substack{e_1'+e_2'=k-r-1\\^\forall e_j'\geq0}}(-1)^{r-i+e_2'+1}\binom{k}{r-i+e_2'+1}\left(\frac{B_{p-k}}{k}\bmod p\right)_p \quad(\textnormal{by \cite[Theorem 4.5]{hes}})\\
&=\sum_{e=r-i+1}^{k-i}(-1)^{e}\binom{k}{e}\left(\frac{B_{p-k}}{k}\bmod p\right)_p 
\quad(\textnormal{by putting}\;\:e=r-i+e_2'+1)\\
&=\left(\sum_{e=0}^{k-i}-\sum_{e=0}^{r-i}\right)(-1)^{e}\binom{k}{e}\left(\frac{B_{p-k}}{k}\bmod p\right)_p\\
&=(-1)^{i-1}\left((-1)^{k+1}\binom{k-1}{i-1}+(-1)^{r}\binom{k-1}{r-i}\right)\left(\frac{B_{p-k}}{k}\bmod p\right)_p.
\end{align*}
Thus we have
\begin{equation}\label{sum}
\sum_{\substack{k_1+\cdots+k_r=k\\^\forall k_j\geq1,k_i\geq2}}\za(k_1,\ldots,k_r)=
(-1)^{i-1}\left((-1)^{k+1}\binom{k-1}{i-1}+(-1)^{r}\binom{k-1}{r-i}\right)\left(\frac{B_{p-k}}{k}\bmod p\right)_p.
\end{equation}
If $k$ is even, then the right-hand side of \eqref{sumformula} is equal to the right-hand side of \eqref{sum} because $B_{p-k}=0$ whenever $p\geq k+3$. 
If $k$ is odd, then the right-hand side of the \eqref{sumformula} is equal to the right-hand side of \eqref{sum} because $(-1)^{k+1}=1$. 
Therefore the proof is complete.
\end{proof}


\section*{Acknowledgment}
The author would like to express his sincere gratitude to Professors Masanobu Kaneko and Shingo Saito for helpful comments and advice.

\noindent
{Kojiro Oyama}\\
{\it E-mail}: {\tt k-oyama@kyudai.jp}

\end{document}